\newtheorem{theorem}{Theorem}
\newtheorem{proposition}[theorem]{Proposition}
\newtheorem{corollary}[theorem]{Corollary}
\newtheorem{lemma}[theorem]{Lemma}
\theoremstyle{remark}
\newtheorem*{remark}{Remark}
\DeclareMathOperator{\GL}{GL}
\DeclareMathOperator{\SL}{SL}
\DeclareMathOperator{\Adj}{Ad}
\DeclareMathOperator{\adj}{ad}
\DeclareMathOperator{\Hom}{Hom}
\DeclareMathOperator{\Sp}{Sp}
\DeclareMathOperator{\SO}{SO}
	\newcommand{\note}[1]{{\textcolor{red}{$\langle$#1$\rangle$}}} 
	\newcommand{\note}[1]{}
\title{Some results on equivariant contact geometry for partial flag varieties}
\author{Peter Crooks}
\author{Steven Rayan}
\address{Department of Mathematics, University of Toronto, Canada}
\email{~~~peter.crooks@utoronto.ca, stever@math.toronto.edu}
\begin{document}
\begin{abstract}  We study equivariant contact structures on complex projective varieties arising as partial flag varieties $G/P$, where $G$ is a connected, simply-connected complex simple group of type $ADE$ and $P$ is a parabolic subgroup.  We prove a special case of the LeBrun-Salamon conjecture for partial flag varieties of these types.  The result can be deduced from Boothby's classification of compact simply-connected complex contact manifolds with transitive action by contact automorphisms, but our proof is completely independent and relies on properties of $G$-equivariant vector bundles on $G/P$.  A byproduct of our argument is a canonical, global description of the unique $SO_{2n}(\mathbb C)$-invariant contact structure on the isotropic Grassmannian of $2$-planes in $\mathbb C^{2n}$.
\end{abstract}

\subjclass[2010]{32M10 (primary); 14J45, 14M15, 22F30, 53D10 (secondary)}


\maketitle

\section{Introduction}  Fano varieties with complex contact structures have been studied enthusiastically over the last half century, in large part due to their distinguished position at the intersection of complex algebraic geometry and real differential geometry.  A compact quaternionic K\"ahler manifold with positive curvature always supports an $S^2$-bundle --- its twistor space --- the total space of which is a Fano contact manifold.  As presented in \cite{KPSW2000}, the LeBrun-Salamon conjecture \cite{LeBrunSalamon} posits that every Fano contact manifold with $b_2=1$ is a homogeneous variety, one that is isomorphic to the unique closed orbit $\mathbb{P}(\mathcal{O}_{\text{min}})$ in the projectivized (co)adjoint representation of some simple Lie group $G$.     If the conjecture were true, then every compact quaternionic K\"ahler variety with positive curvature would necessarily be homogeneous, and so progress on the LeBrun-Salamon conjecture is crucial to resolving an outstanding geometric classification problem within Riemannian geometry.  The work of Beauville \cite{Beauville1998} is the strongest evidence thus far for the validity of the conjecture.  For multiple points of view on Fano contact varieties, including the minimal rational curves and Mori theory approaches, we refer the reader to \cite{Hwang2001,Kebekus2001,Peternell2001-1,Peternell2001-2,Buczynski2010}.

On another front, complex contact manifolds have been studied in the context of equivariant geometry. Most notably, Boothby \cite{Boothby1961,Boothby1962} gives a complete classification of those compact simply-connected complex contact manifolds which are acted upon transitively by their respective groups of contact automorphisms (the so-called homogeneous complex contact manifolds). He identifies each with $\mathbb{P}(\mathcal{O}_{\text{min}})$ for a suitable simple group $G$.

\subsection{Results}  
This article presents some results on equivariant contact geometry for partial flag varieties. Firstly, we give a self-contained proof of the following special case of the LeBrun-Salamon conjecture.

\begin{theorem}\label{Main Theorem}\
Let $G$ be a connected, simply-connected complex simple group of type $ADE$, and let $X$ be a partial flag variety of $G$ with $b_2(X)=1$. If $X$ is endowed with a $G$-invariant complex contact structure, then there exists a $G$-equivariant isomorphism $X\cong\mathbb{P}(\mathcal{O}_{\text{min}})$ of contact varieties, where $\mathcal{O}_{\text{min}}$ is the minimal nilpotent orbit of $G$. 
\end{theorem}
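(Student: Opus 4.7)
Since $b_2(X)=1$, the parabolic $P$ is maximal, corresponding to removal of a single simple root $\alpha_i$; write $m:=c_i$ for the coefficient of $\alpha_i$ in the highest root $\theta$. My plan is to translate the $G$-invariant contact data into representation-theoretic conditions on the Levi decomposition of $\mathfrak{g}/\mathfrak{p}$, and then to identify $\alpha_i$ directly from the ADE root data.

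First I would note that a $G$-invariant contact structure on $X=G/P$ corresponds, by evaluation at the identity coset, to a $P$-invariant hyperplane $\mathfrak{d}\subset\mathfrak{g}/\mathfrak{p}$ whose one-dimensional quotient $\chi:=(\mathfrak{g}/\mathfrak{p})/\mathfrak{d}$ supports a nondegenerate skew form $\mathfrak{d}\wedge\mathfrak{d}\to\chi$ induced by the Lie bracket. Using the $\mathbb{Z}$-grading $\mathfrak{g}=\bigoplus_{k=-m}^{m}\mathfrak{g}_k$ and the fact that each nonzero $\mathfrak{g}_k$ is an irreducible Levi module for a maximal parabolic in a simple Lie algebra, I would argue that the only $P$-invariant hyperplane is $\mathfrak{d}=\bigoplus_{k=1}^{m-1}\mathfrak{g}_{-k}$ with $\chi=\mathfrak{g}_{-m}$ one-dimensional: Levi-invariance restricts $\mathfrak{d}$ to drop a single one-dimensional graded summand $\mathfrak{g}_{-k_0}$, and $N_P$-invariance forces $k_0=m$ because $[\mathfrak{g}_j,\mathfrak{g}_{-(j+k_0)}]=\mathfrak{g}_{-k_0}$ surjectively in a simple Lie algebra whenever $0<j\leq m-k_0$. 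This rules out $m=1$ (the Hermitian symmetric case) at once, since then $\mathfrak{g}/\mathfrak{p}$ is abelian.

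The main obstacle will be pinning down $\alpha_i$. Here the key observation is that $\dim\mathfrak{g}_{-m}=1$ is equivalent to $\theta$ being the unique positive root with $\alpha_i$-coefficient $m$; since $\theta-\alpha_j$ is a root (with the same $\alpha_i$-coefficient as $\theta$) whenever $j\neq i$ and $(\theta,\alpha_j)\geq 1$, this uniqueness is equivalent to $(\theta,\alpha_j)=0$ for every simple root $\alpha_j\neq\alpha_i$. Consulting the ADE Cartan data, the unique simple root satisfying this condition is the node $\alpha_{j_0}$ adjacent to $-\theta$ in the extended Dynkin diagram of $\mathfrak{g}$, which has mark $c_{j_0}=2$ in each case; hence $m=2$ and the grading is the Heisenberg-type contact grading $\mathfrak{g}=\mathfrak{g}_{-2}\oplus\mathfrak{g}_{-1}\oplus\mathfrak{g}_0\oplus\mathfrak{g}_1\oplus\mathfrak{g}_2$ with $\mathfrak{g}_{\pm 2}$ one-dimensional. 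In type $A_n$ every maximal parabolic has $m=1$, so the theorem's hypothesis is vacuous; in types $D_n$, $E_6$, $E_7$, and $E_8$ the simple root $\alpha_{j_0}$ is unique.

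Having identified $P=P_{\alpha_{j_0}}$, the parabolic coincides with $\mathrm{Stab}_G([e_\theta])$ and the orbit map $gP\mapsto[\mathrm{Ad}_g\,e_\theta]$ yields the desired $G$-equivariant isomorphism $X\cong\mathbb{P}(\mathcal{O}_{\text{min}})$. To conclude that the contact structures agree, I would trace through that on both sides the contact hyperplane is the Levi-invariant complement of $\mathfrak{g}_{-2}=\mathbb{C}e_{-\theta}$ inside $\mathfrak{g}/\mathfrak{p}$, and the Lie-bracket form is the $L$-equivariantly unique (up to scale) skew form on $\mathfrak{g}_{-1}$ with values in $\mathfrak{g}_{-2}$, matching the natural contact structure on $\mathbb{P}(\mathcal{O}_{\text{min}})$ inherited from the Kirillov-Kostant symplectic form on $\mathcal{O}_{\text{min}}$.
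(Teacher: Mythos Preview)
Your argument is correct and reaches the same destination as the paper, but by a genuinely different route. The paper's proof is line-bundle theoretic: it uses the determinant identity $\mathbf L^{\otimes(n+1)}\cong\mathbf K_{G/P}^{\vee}$ coming from the contact structure to show that the weight $\gamma$ of the contact line bundle is a \emph{dominant} root, hence (in the simply-laced case) equals the highest root $\lambda$; the inclusion $\lambda\in X^*(T)^{W_S}=\mathbb Z\omega_\alpha$ then forces $\alpha$ to be the unique simple root not orthogonal to $\lambda$. Your approach bypasses the Picard-group computation entirely and works directly with the $\mathbb Z$-grading on $\mathfrak g$: nilradical-invariance (via bracket generation in a simple Lie algebra) forces the $P$-invariant hyperplane to omit only the top piece $\mathfrak g_{\pm m}$, and Levi-irreducibility of that piece forces it to be one-dimensional; the elementary root-system observation that $\theta-\alpha_j$ is a root whenever $(\theta,\alpha_j)>0$ then identifies $\alpha_i$ just as the paper does. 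Two small points worth tightening: you only need irreducibility of the \emph{top} graded piece $\mathfrak g_{\pm m}$ (which follows from the saturated-chain property of roots below $\theta$), not of every $\mathfrak g_k$; and the sentence ``this rules out $m=1$'' should be read as excluding the Hermitian-symmetric cases with $\dim\mathfrak g_{-1}>1$, since $A_1$ (where $m=1$ and $\dim\mathfrak g_{-1}=1$) is the trivial instance $\mathbb P^1=\mathbb P(\mathcal O_{\min})$. A pleasant byproduct of your approach is that it shows something a bit stronger than the paper states: in type $ADE$, the mere existence of a $G$-invariant corank-$1$ subbundle of $\mathbf T_{G/P}$ (for $P$ maximal and not cominuscule) already forces $P=P_\Lambda$, without invoking the nondegeneracy of the contact form or the relation $\mathbf L^{\otimes(n+1)}\cong\mathbf K^{\vee}$.
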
     

While Theorem \ref{Main Theorem} is deducible from Boothby's work, our argument differs significantly from that offered in \cite{Boothby1961,Boothby1962}. Our approach is instead based on a sequence of results concerning the geometry (both equivariant and non-equivariant) of partial flag varieties $G/P$, where $P$ is a parabolic subgroup.  Specifically, we prove that a $G$-invariant corank-$1$ subbundle $\mathbf{E}$ of $\mathbf{T}_{G/P}$ is completely determined as such by the isomorphism class of the quotient line bundle $\mathbf{T}_{G/P}/\mathbf{E}$ (see Proposition \ref{Uniqueness of Corank-One Subbundles}). This leads us to prove Proposition \ref{Description of the Contact Line Bundle}, which describes the contact line bundle of a $G$-invariant contact structure on $G/P$ in terms of the isomorphism between $\text{Pic}(G/P)$ and the group of $1$-dimensional $P$-representations. Proposition \ref{Description of Alpha and Lambda} and Theorem \ref{Main Theorem 2} then combine to give us the desired $G$-equivariant contact variety isomorphism between $G/P$ and $\mathbb{P}(\mathcal{O}_{\text{min}})$.        

Secondly, we offer a detailed description of the contact manifold in Theorem \ref{Main Theorem} when $G$ is of type $D_n$. This manifold is precisely the Grassmannian $Gr_B(2,\mathbb{C}^{2n})$ of those $2$-planes in $\mathbb{C}^{2n}$ which are isotropic with respect to the complex-bilinear dot product. While there are descriptions of the $\SO_{2n}(\mathbb{C})$-invariant contact distribution $\mathbf{E}$ on $Gr_B(2,\mathbb{C}^{2n})$ appearing in the literature (e.g. \cite{Hwang2001}), ours is global and canonical.  Indeed, we use the classical identification of the tangent bundle of the full Grassmannian $Gr(2,\mathbb{C}^{2n})$ with $\Hom(\mathbf{F},\mathcal O^{\oplus 2n}/\mathbf{F})$, where $\mathbf{F}$ is the tautological bundle on $Gr(2,\mathbb{C}^{2n})$. We then present $\mathbf{E}$ explicitly as a subbundle of the pullback to $Gr_B(2,\mathbb{C}^{2n})$ of  $\Hom(\mathbf{F},\mathcal O^{\oplus 2n}/\mathbf{F})$.\\             

\emph{\textbf{Acknowledgements.}}  We gratefully acknowledge the support provided by Lisa Jeffrey and John Scherk.  We also thank Steven Lu for useful discussions.  The first author was supported by NSERC CGS and OGS awards.  During this work, the second author was supported by a University of Toronto at Scarborough Postdoctoral Fellowship.

\section{Review of Properties of Fano Contact Varieties}\label{Section-Review-Fano-Contact}

Here, we review the salient features of complex contact varieties in general and Fano contact varieties in particular. Let $X$ be a smooth complex variety of complex dimension $2n+1$ for some $n\geq0$, and let $\iota:\mathbf E\hookrightarrow\mathbf T_X$ be a rank-$2n$ holomorphic subbundle of the tangent bundle $\mathbf T_X$.  We say that the pair $(X,\mathbf E)$ is \emph{contact} if, in the short exact sequence\begin{eqnarray}0\longrightarrow\mathbf E\stackrel{\iota}{\longrightarrow}\mathbf T_X\stackrel{\theta}{\longrightarrow}\mathbf L\longrightarrow0\label{ContactSES}\end{eqnarray}\noindent induced by $\iota$, the composition of the Lie bracket on sections of $\mathbf T_X$ with the quotient map $\theta$ is an $\mathbf L$-twisted bilinear form that is nondegenerate along $\mathbf E$.  In keeping with the literature, we call the subbundle $\mathbf E$ the \emph{contact distribution}; the quotient $\mathbf L$, the \emph{contact line bundle} of $(X,\mathbf E)$.  If there exists an $\mathbf E\to X$ for which the pair $(X,\mathbf E)$ is contact, we say that $X$ \emph{admits a contact structure}. 

From now on, we also assume that $X$ is projective, that $b_2(X)=1$, and that $X$ admits a contact structure, the distribution of which is  $\mathbf E$.  Let $\mathbf L$ be the associated contact line bundle.  We use $\mathbf K_X$ and $\mathbf K_X^\vee$, respectively, for the canonical and anticanonical line bundles of $X$.  In this case, $X$ is Fano with $\mbox{Pic}(X)\cong\mathbb Z$ and $\mathbf K_X^\vee\cong\mathbf L^{\otimes n+1}$. This characterization is a consequence of a theorem of Demailly (Cor. 3 in \cite{Demailly2002}), applied to an earlier result of Kebekus, Peternell, Sommese, Wi\'sniewski (Thm. 1.1 in \cite{KPSW2000}).

There are of course exactly two possibilities: either the contact line bundle $\mathbf L$ is a generator of $\mbox{Pic}(X)$ or it is not.  If it is not, then $\mathbf L$ is a holomorphic $(n+1)$-th root of $\mathbf K_X^\vee$ and $\mathbf L$ itself has nontrivial roots (namely, a generator of $\mbox{Pic}(X)$).  In this case, $X$ must be $\mathbb P^N$ for some $N$, by the well-known Kobayashi-Ochiai characterization of complex projective space \cite{KobayashiOchiai1973}.  Hence, whenever $X$ is a projective Fano contact variety with $b_2=1$ that is \emph{not} a projective space, then it must be that $\mbox{Pic}(X)=\mathbb Z\cdot[\mathbf L]$.

Taking these observations together, we have that:\begin{itemize}\item $\mathbf L$ is ample (in particular, it is the ample generator of $\mbox{Pic}(X)$ whenever $X\ncong\mathbb P^N$); and\item if $\mathbf L'$ is any other contact line bundle on $X$, then there must exist a holomorphic vector bundle isomorphism $\mathbf L\cong\mathbf L'$.\end{itemize}

The second fact is true because $\mbox{Pic}(X)\cong\mathbb Z$ and $\mathbf L$ and $\mathbf L'$ are holomorphic roots of the same line bundle (and hence $\deg\mathbf L=\deg\mathbf L'$).

\section{Partial Flag Varieties and Contact Structures}
\subsection{Basic Setup}\label{Basic Setup}

Now, we specialize to the case where $X$ arises as a partial flag variety for a simple Lie group $G$.

To be precise, let $G$ be a connected, simply-connected complex simple group with Lie algebra $\mathfrak{g}$. Fix a maximal torus $T\subseteq G$ with Lie algebra $\mathfrak{t}\subseteq\mathfrak{g}$. Let $X^*(T)$ denote the weight lattice, and let $\Delta\subseteq X^*(T)$ be the resulting collection of roots. Also, let $(\cdot,\cdot)$ be the standard inner product on $\text{span}_{\mathbb{R}}(\Delta)\subseteq\mathfrak{t}^*$. Choose collections $\Pi\subseteq\Delta^+$ of simple roots and positive roots, respectively. Since $\mathfrak{g}$ is simple, there exists a unique highest root $\lambda$ with respect to the partial order induced by the choice of $\Pi$. Let $B\subseteq G$ be the opposite Borel with respect to our choice of positive roots. After fixing a subset $S$ of $\Pi$, let $\Delta_S^+$ be the set of those positive roots expressible as $\mathbb{Z}$-linear combinations of the roots in $S$. One then has the standard parabolic subgroup $P_S$ generated by $B$ and the root subgroups coming from the roots in $\Delta_S^+$. Hence, the Lie algebra of $P_S$ is precisely $$\mathfrak{p}_S=\mathfrak{b}\oplus\bigoplus_{\beta\in\Delta_S^+}\mathfrak{g}_{\beta},$$ where $\mathfrak{b}$ is the Lie algebra of $B$. Finally, let $W=N_G(T)/T$ be the Weyl group, and let $W_S$ denote the subgroup of $W$ generated by the simple reflections $\{s_{\beta}:\beta\in\Pi\setminus S\}$.  The quotient $G/P_S$ is not only a complex projective variety, but is also Fano --- see, for instance, Thm. V.1.4 in \cite{Kollar1996}.

We will make extensive use of vector bundles on $G/P_S$ arising from $P_S$-representations via the associated bundle construction. More explicitly, suppose that $\varphi:P_S\rightarrow\GL(V)$ is a finite-dimensional complex $P_S$-representation. One has a free $P_S$-action on $G\times V$ defined by $$p\cdot (g,v)=(gp^{-1},\varphi(p)v),$$ $g\in G$, $v\in V$, $p\in P_S$, with quotient variety denoted $G\times_{P_S}V$. Note that the action of $G$ on $G\times V$ given by left-multiplication in the first factor commutes with the $P_S$-action, so that $G\times_{P_S}V$ carries a residual $G$-action. The projection map $$\pi:G\times_{P_S}V\rightarrow G/P_S$$ $$[(g,v)]\mapsto [g]$$ then realizes $G\times_{P_S}V$ as a $G$-equivariant holomorphic vector bundle over $G/P_S$, called the associated bundle for $V$. In short, $G\times_{P_S}V$ is the $G$-equivariant holomorphic vector bundle over $G/P_S$ with fibre over $[e]\in G/P_S$ isomorphic to $V$ as a representation of $P_S$.

We now recall three standard group isomorphisms of importance to our work. Firstly, since $H^1(G/P_S,\mathcal{O}_{G/P_S})=0=H^2(G/P_S,\mathcal{O}_{G/P_S})$ (see \cite{Snow1989}), the exponential sequence gives rise to a group isomorphism $$\text{Pic}(G/P_S)\rightarrow H^2(G/P_S;\mathbb{Z})$$ $$[\mathbf F]\mapsto c_1(\mathbf F).$$ Secondly, recall that $$X^*(T)^{W_S}\rightarrow H^2(G/P_S;\mathbb{Z})$$ $$\beta\mapsto c_1(\mathcal{L}(\beta))$$ is a group isomorphism, where $\mathcal{L}(\beta)$ is the associated line bundle on $G/P_S$ arising from the $1$-dimensional $P_{S}$-representation of weight $\beta$. We therefore have a third isomorphism, $$X^*(T)^{W_S}\rightarrow\text{Pic}(G/P_S)$$ $$\beta\mapsto[\mathcal{L}(\beta)].$$ It will be prudent to recall a natural $\mathbb{Z}$-basis of the group $X^*(T)^{W_S}$. For $\beta\in\Pi$, let $h_{\beta}\in[\mathfrak{g}_{\beta},\mathfrak{g}_{-\beta}]$ be the corresponding simple coroot. Note that the $h_{\beta}$ form a basis of $\mathfrak{t}$ dual to the basis of fundamental weights $\omega_{\beta}\in X^*(T)$, $\beta\in\Pi$.

\begin{lemma}\label{Basis of Invariant Weight Lattice}
The group $X^*(T)^{W_S}$ has a $\mathbb{Z}$-basis of $\{\omega_{\beta}:\beta\in\Pi\setminus S\}$.
\end{lemma}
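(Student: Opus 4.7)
The plan is to expand an arbitrary weight in the fundamental-weight basis and read off the $W_S$-invariance conditions from the elementary simple-reflection action formula. Because $G$ is simply connected, the fundamental weights $\{\omega_\beta\}_{\beta\in\Pi}$ form a $\mathbb{Z}$-basis of $X^*(T)$ --- namely the basis dual to the simple coroots $\{h_\beta\}_{\beta\in\Pi}$ singled out just before the lemma. Any $\lambda\in X^*(T)$ therefore admits a unique expansion $\lambda = \sum_{\beta\in\Pi} c_\beta\,\omega_\beta$ with $c_\beta\in\mathbb{Z}$, and the lemma reduces to characterizing those coefficient tuples for which $\lambda$ is $W_S$-fixed.

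Next I would apply the standard simple-reflection formula $s_\gamma(\mu) = \mu - \mu(h_\gamma)\,\gamma$ on $\mathfrak{t}^*$. Taking $\mu = \omega_\beta$ and using the duality $\omega_\beta(h_\gamma) = \delta_{\beta\gamma}$ gives $s_\gamma(\omega_\beta) = \omega_\beta - \delta_{\beta\gamma}\,\gamma$, and hence
\begin{equation*}
\lambda - s_\gamma(\lambda) = c_\gamma\,\gamma.
\end{equation*}
Since $\gamma\neq 0$, the weight $\lambda$ is fixed by $s_\gamma$ if and only if $c_\gamma = 0$. Because $W_S$ is generated by simple reflections, $\lambda$ is $W_S$-invariant if and only if it is fixed by every generating simple reflection, equivalently if $c_\gamma = 0$ for each index $\gamma$ appearing among those generators. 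The remaining coefficients are unconstrained, so the $\omega_\beta$ with $\beta\in\Pi\setminus S$ freely generate $X^*(T)^{W_S}$ over $\mathbb{Z}$; they are linearly independent as a subset of the ambient basis of fundamental weights, yielding the claimed $\mathbb{Z}$-basis.

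I do not anticipate any real obstacle here: the entire argument is a short dual-basis computation. The only point demanding genuine attention is the appeal to simple-connectedness of $G$, which is precisely what ensures that the fundamental weights exhaust the character lattice $X^*(T)$ rather than merely spanning a strict sublattice (such as the root lattice), so that the conclusion concerns the full group $X^*(T)^{W_S}$ as stated.
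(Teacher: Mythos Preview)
Your proof is correct and follows essentially the same line as the paper's: both reduce $W_S$-invariance to invariance under the generating simple reflections, translate this into the vanishing of the corresponding fundamental-weight coefficients (the paper phrases this as orthogonality to the simple roots in $S$, you as $c_\gamma=0$), and conclude. Your explicit remark that simple-connectedness is what guarantees the fundamental weights span the full character lattice is a welcome addition that the paper leaves implicit.
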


\begin{proof}
Note that $\delta\in X^*(T)$ belongs to $X^*(T)^{W_S}$ if and only if $\delta$ is fixed by each simple reflection $s_{\beta}$, $\beta\in S$. This holds if and only if $\delta$ is orthogonal to each simple root in $S$. The desired conclusion then follows from the fact that $$\delta=\sum_{\beta\in \Pi}\delta(h_{\beta})\omega_{\beta}=\sum_{\beta\in\Pi}2\frac{(\delta,\beta)}{(\beta,\beta)}\omega_{\beta}$$ is the expression of $\delta$ as a linear combination of the fundamental weights.
\end{proof}

We conclude this section with a proposition that will be of use later.

\begin{proposition}\label{Uniqueness of Corank-One Subbundles}
If $\mathbf F_1$ and $\mathbf F_2$ are $G$-invariant corank-$1$ subbundles of $\mathbf T_{G/P_{S}}$ and the quotients $\mathbf T_{G/P_{S}}/\mathbf F_1$ and $\mathbf T_{G/P_{S}}/\mathbf F_2$ are isomorphic as holomorphic line bundles, then $\mathbf F_1=\mathbf F_2$.
\end{proposition}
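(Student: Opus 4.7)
The plan is to convert the geometric problem into a representation-theoretic one via the equivalence of categories between $G$-equivariant holomorphic vector bundles on $G/P_S$ and finite-dimensional complex $P_S$-representations, the inverse of the associated bundle construction reviewed in Section \ref{Basic Setup}. Under this equivalence, $\mathbf{T}_{G/P_S}$ corresponds to $\mathfrak{g}/\mathfrak{p}_S$ with its adjoint $P_S$-action, each $G$-invariant corank-$1$ subbundle $\mathbf{F}_i\hookrightarrow\mathbf{T}_{G/P_S}$ corresponds to a codimension-$1$ $P_S$-submodule $V_i\subseteq\mathfrak{g}/\mathfrak{p}_S$, and the quotient line bundle $\mathbf{L}_i:=\mathbf{T}_{G/P_S}/\mathbf{F}_i$ corresponds to the $1$-dimensional $P_S$-representation $\mathbb{C}_{\chi_i}:=(\mathfrak{g}/\mathfrak{p}_S)/V_i$ for some character $\chi_i$ of $P_S$. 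The goal is then to show $V_1=V_2$.

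The first step is to verify $\chi_1=\chi_2$. Since $G$ is simple and simply-connected, $\Hom(G,\mathbb{G}_m)$ is trivial, so the set of $G$-equivariant structures on any fixed line bundle over $G/P_S$ is a torsor for the trivial group. Equivalently, the forgetful map from $G$-equivariant line bundles to $\text{Pic}(G/P_S)$ is injective; in view of the isomorphism $X^*(T)^{W_S}\to\text{Pic}(G/P_S)$ recalled in Section \ref{Basic Setup}, it is in fact bijective. A hypothesized non-equivariant isomorphism $\mathbf{L}_1\cong\mathbf{L}_2$ therefore lifts uniquely to a $G$-equivariant one, which in representation-theoretic terms says $\chi_1=\chi_2$.

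It then suffices to show that, for a fixed character $\chi$ of $P_S$, there is at most one codimension-$1$ $P_S$-submodule $V\subseteq\mathfrak{g}/\mathfrak{p}_S$ with $(\mathfrak{g}/\mathfrak{p}_S)/V\cong\mathbb{C}_\chi$. I would reduce this to showing $\dim_{\mathbb{C}}\Hom_{P_S}(\mathfrak{g}/\mathfrak{p}_S,\mathbb{C}_\chi)\leq 1$, for then the surjection $\mathfrak{g}/\mathfrak{p}_S\to\mathbb{C}_\chi$---and hence its kernel---is unique up to rescaling. As a $T$-module, $\mathfrak{g}/\mathfrak{p}_S$ splits as the direct sum of the $1$-dimensional weight spaces $\mathfrak{g}_\beta$ with $\beta\in\Delta^+\setminus\Delta_S^+$ (using the excerpt's convention that $B$ is the opposite Borel). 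Hence $\Hom_T(\mathfrak{g}/\mathfrak{p}_S,\mathbb{C}_\chi)$ is at most $1$-dimensional, and the same is therefore true of its subspace $\Hom_{P_S}(\mathfrak{g}/\mathfrak{p}_S,\mathbb{C}_\chi)$. This yields $V_1=V_2$ and hence $\mathbf{F}_1=\mathbf{F}_2$.

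I expect the main subtlety to be the first step, namely the promotion of a non-equivariant isomorphism of line bundles to an equivariant one; once this is handled via the vanishing of $\Hom(G,\mathbb{G}_m)$, the remainder is a direct weight-multiplicity computation on $\mathfrak{g}/\mathfrak{p}_S$.
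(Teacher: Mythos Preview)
Your proposal is correct and follows essentially the same approach as the paper: both pass to $P_S$-representations via the associated bundle construction, use that the $T$-weight spaces of $\mathfrak{g}/\mathfrak{p}_S$ are $1$-dimensional to pin down the codimension-$1$ submodule, and invoke the isomorphism $X^*(T)^{W_S}\cong\text{Pic}(G/P_S)$ to pass from a holomorphic line bundle isomorphism to equality of characters. The only cosmetic difference is that the paper phrases the second step as ``removing a single weight space'' rather than bounding $\dim\Hom_T(\mathfrak{g}/\mathfrak{p}_S,\mathbb{C}_\chi)$, and it cites the Picard group isomorphism directly where you unpack it via the vanishing of $\Hom(G,\mathbb{G}_m)$.
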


\begin{proof}
To begin, note that $\left(\mathbf T_{G/P_S}\right)_{[e]}$ is canonically isomorphic to $\mathfrak{g}/\mathfrak{p}_S$ as a $P_S$-representation, so that \begin{equation}\label{Tangent Bundle} \mathbf T_{G/P_S}\cong G\times_{P_S}(\mathfrak{g}/\mathfrak{p}_S).\end{equation} The isomorphism \eqref{Tangent Bundle} restricts to isomorphisms $$\mathbf F_1\cong G\times_{P_S}V_1$$ and $$\mathbf F_2\cong G\times_{P_S}V_2,$$ where $V_1$ and $V_2$ are codimension-$1$ $P_S$-subrepresentations of $\mathfrak{g}/\mathfrak{p}_S$. Since each $T$-weight space of \begin{equation}\label{Representation Isomorphism}
\mathfrak{g}/\mathfrak{p}_S=\bigoplus_{\beta\in\Delta^+\setminus\Delta_S^+}\mathfrak{g}_{\beta}
\end{equation}
is $1$-dimensional, each of $V_1$ and $V_2$ is obtained by removing a single weight space from the sum \eqref{Representation Isomorphism}. Let $\gamma_1,\gamma_2\in\Delta^+$ be the weights discarded to obtain $V_1$ and $V_2$, respectively. We then have bundle isomorphisms
$$\mathbf T_{G/P_{S}}/\mathbf F_1\cong G\times_{P_S}((\mathfrak{g}/\mathfrak{p}_S)/V_1)\cong\mathcal{L}(\gamma_1)$$ and $$\mathbf T_{G/P_{S}}/\mathbf F_2\cong G\times_{P_S}((\mathfrak{g}/\mathfrak{p}_S)/V_2)\cong\mathcal{L}(\gamma_2).$$ In particular, $\mathcal{L}(\gamma_1)\cong\mathcal{L}(\gamma_2)$ as holomorphic line bundles over $G/P_S$, so that $\gamma_1=\gamma_2$. Hence, $V_1=V_2$, implying that $\mathbf F_1$ and $\mathbf F_2$ identify with the same subbundle of $G\times_{P_S}(\mathfrak{g}/\mathfrak{p}_S)$ under \eqref{Tangent Bundle}. This completes the proof.
\end{proof}

In the case that $\mathbf F_1$ and $\mathbf F_2$ define contact structures, we have the following immediate

\begin{corollary}\label{Uniqueness of Contact Structure}
If each of $\mathbf F_1$ and $\mathbf F_2$ is the distribution of a $G$-invariant contact structure on $G/P_S$ and $\mbox{Pic}(G/P_S)\cong\mathbb Z$, then $\mathbf F_1=\mathbf F_2$.
\end{corollary}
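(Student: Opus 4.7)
The corollary is flagged in the text as ``immediate'', and indeed the plan is simply to combine the preceding Proposition~\ref{Uniqueness of Corank-One Subbundles} with the line-bundle dichotomy recorded in Section~\ref{Section-Review-Fano-Contact}.

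First I would observe that, by the very definition of a contact structure, each $\mathbf{F}_i$ is a holomorphic corank-$1$ subbundle of $\mathbf{T}_{G/P_S}$. Since by hypothesis each $\mathbf{F}_i$ is $G$-invariant, both hypotheses of Proposition~\ref{Uniqueness of Corank-One Subbundles} are in place except possibly for the isomorphism of the quotient line bundles. So the only thing to verify is that the two contact line bundles $\mathbf{L}_i := \mathbf{T}_{G/P_S}/\mathbf{F}_i$ (for $i=1,2$) are isomorphic as holomorphic line bundles.

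This is precisely the content of the second bullet point recalled in Section~\ref{Section-Review-Fano-Contact}: on any projective Fano contact variety $X$ with $b_2(X)=1$, any two contact line bundles are isomorphic, because $\mathrm{Pic}(X)\cong \mathbb{Z}$ and both are $(n+1)$-th roots of $\mathbf{K}_X^{\vee}$ and thus have equal degree. Our hypothesis $\mathrm{Pic}(G/P_S)\cong\mathbb{Z}$ (equivalently, $b_2(G/P_S)=1$, since the Picard group of $G/P_S$ is torsion-free by the isomorphism $\mathrm{Pic}(G/P_S)\to H^2(G/P_S;\mathbb{Z})$ recalled above) supplies exactly the condition needed to invoke that dichotomy. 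Therefore $\mathbf{L}_1\cong \mathbf{L}_2$.

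Having verified the hypotheses of Proposition~\ref{Uniqueness of Corank-One Subbundles}, the conclusion $\mathbf{F}_1 = \mathbf{F}_2$ follows at once. There is no real obstacle here; the whole point of Proposition~\ref{Uniqueness of Corank-One Subbundles} is that the combinatorial rigidity of $G$-invariant subbundles of $\mathbf{T}_{G/P_S}$ (each such corank-$1$ subbundle corresponds to discarding a single positive root $\gamma\in\Delta^+\setminus\Delta_S^+$) reduces the question to an identification of weights $\gamma_1 = \gamma_2$, and the Picard-rank-one hypothesis is exactly what is needed to read this identification off from the isomorphism class of $\mathbf{L}_i = \mathcal{L}(\gamma_i)$.
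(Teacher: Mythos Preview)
Your proposal is correct and follows exactly the route the paper takes: combine Proposition~\ref{Uniqueness of Corank-One Subbundles} with the fact that $G/P_S$ is Fano and then invoke the second observation at the end of Section~\ref{Section-Review-Fano-Contact} to conclude that the two contact line bundles are isomorphic. Your write-up is in fact more detailed than the paper's one-line justification, but the logic is identical.
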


This is simply the result of combining Proposition \ref{Uniqueness of Corank-One Subbundles} with the fact $G/P_S$ is Fano (and then applying the second observation listed at the end of Section \ref{Section-Review-Fano-Contact}.

\subsection{The Projectivization of the Minimal Nilpotent Orbit}\label{The Projectivization of the Minimal Nilpotent Orbit}
The material in \ref{Basic Setup} facilitates a worthwhile discussion of $\mathbb{P}(\mathcal{O}_{\text{min}})$ and its $G$-invariant contact structure. To this end, recall that the nilpotent cone of $\mathfrak{g}$ is the closed subvariety $$\mathcal{N}=\{\xi\in\mathfrak{g}:\adj(\xi)\text{ is nilpotent}\},$$ where $\adj:\mathfrak{g}\rightarrow\mathfrak{gl}(\mathfrak{g})$ is the adjoint representation of $\mathfrak{g}$. The nilpotent cone is $G$-invariant and consists of finitely many $G$-orbits, called nilpotent orbits. The set of nilpotent orbits is partially ordered according to the closure order, namely $\mathcal{O}_1\leq\mathcal{O}_2$ if and only if $\mathcal{O}_1\subseteq\overline{\mathcal{O}_2}$. The non-zero nilpotent orbits have a unique minimal element, $\mathcal{O}_{\text{min}}$, called the minimal nilpotent orbit. It is known that $\mathcal{O}_{\text{min}}$ is the $G$-orbit of a non-zero vector in the lowest root space $\mathfrak{g}_{-\lambda}$. 

Nilpotent orbits are invariant under scaling action of $\mathbb{C}^*$ on $\mathfrak{g}$. In particular, we have an inclusion of $\mathbb{P}(\mathcal{O}_{\text{min}}):=\mathcal{O}_{\text{min}}/\mathbb{C}^*$ into $\mathbb{P}(\mathfrak{g})$ as a closed $G$-orbit. Now, suppose that $\xi\in\mathfrak{g}_{-\lambda}\setminus\{0\}$, which determines a class $[\xi]\in\mathbb{P}(\mathcal{O}_{\text{min}})$. The $G$-stabilizer of $[\xi]$ is the standard parabolic subgroup $P_{\Lambda}$, where $\Lambda$ is the collection of those simple roots which are orthogonal to $\lambda$. We therefore have the $G$-variety isomorphism \begin{equation}\label{Isomorphism}\varphi:G/P_{\Lambda}\xrightarrow{\cong}\mathbb{P}(\mathcal{O}_{\text{min}})\end{equation} $$[g]\mapsto[\Adj(g)(\xi)].$$ 

It turns out that $\mathbb{P}(\mathcal{O}_{\text{min}})$ carries a distinguished $G$-invariant contact structure, $\mathbf E_{\text{min}}\subseteq T_{\mathbb{P}(\mathcal{O}_{\text{min}})}$. To obtain it, note that the Killing form on $\mathfrak{g}$ restricts to a $G$-equivariant variety isomorphism between $\mathcal{O}_{\text{min}}$ and a coadjoint orbit in $\mathfrak{g}^*$. The latter has the Kirillov-Kostant-Souriau symplectic structure, so that $\mathcal{O}_{\text{min}}$ is symplectic. The symplectic form on $\mathcal{O}_{\text{min}}$ has weight $1$ with respect to the $\mathbb{C}^*$-action, and Lemma 1.4 of \cite{Beauville1998} then gives the desired contact structure on $\mathbb{P}(\mathcal{O}_{\text{min}})$.

Using Remark 2.3 from \cite{Beauville1998}, one can more explicitly describe the bundle $\mathbf E_{\text{min}}$. Let $[\xi]\in\mathbb{P}(\mathcal{O}_{\text{min}})$ be the class of a lowest root vector, as above. Via the isomorphism \eqref{Isomorphism}, the fibre $(\mathbf E_{\text{min}})_{[\xi]}$ identifies with a codimension-$1$ subspace of $\mathfrak{g}/\mathfrak{p}_{\Lambda}$, the tangent space of $G/P_{\Lambda}$ at the identity coset. Now, note that $\mathfrak{p}_{\Lambda}\subseteq\mathfrak({g}_{-\lambda})^{\perp}$, the orthogonal complement of $\mathfrak{g}_{-\lambda}$ with respect to the Killing form. Our fibre is then given by \begin{equation}
\label{Fibre}(\mathbf E_{\text{min}})_{[\xi]}=(\mathfrak{g}_{-\lambda})^{\perp}/\mathfrak{p}_{\Lambda}.\end{equation} Since $\mathbf E_{\text{min}}$ is a $G$-invariant subbundle of $T_{\mathbb{P}(\mathcal{O}_{\text{min}})}$, \eqref{Fibre} can be used to determine the fibre of $\mathbf E_{\text{min}}$ over any point. 

\subsection{Reduction to the Case of a Maximal Parabolic}\label{Reduction to the Case of a Maximal Parabolic}
Let us begin to directly address the classification of partial flag varieties admitting $G$-invariant contact structures. To this end, assume $S\subseteq\Pi$ is such that $G/P_S$ admits a $G$-invariant contact structure $\mathbf E\subseteq\mathbf T_{G/P_S}$. In light of earlier remarks, we shall also assume that $b_2(G/P_S)=1$. This second assumption imposes a significant constraint on the subsets $S$ under consideration. Indeed, one has a Schubert cell decomposition of $G/P_S$ into $B$-orbits, $$G/P_S=\coprod_{[w]\in W/W_S}BwP_S/P_S,$$ so that $H^2(G/P_S;\mathbb{Z})$ is free of rank equal to the number of (complex) codimension-$1$ Schubert cells. Since the codimension of $BwP_S/P_S$ in $G/P_S$ is the length of a minimal-length coset representative in $[w]\in W/W_S$, the codimension-$1$ Schubert cells are those of the form $Bs_{\beta}P_S/P_S$, $\beta\in\Pi\setminus S$. Hence, the condition $b_2(G/P_S)=1$ implies that $\Pi\setminus S$ has cardinality $1$, so that $S=\Pi\setminus\{\alpha\}$ for some unique $\alpha\in\Pi$. In other words, $P_S$ is a maximal parabolic subgroup of $G$.

\subsection{The Contact Line Bundle on $G/P_S$}\label{The Contact Line Bundle on G/P} We now give a more explicit description of the $G$-invariant contact structure $\mathbf E$. Using the bundle isomorphism \eqref{Tangent Bundle}, we will regard the fibre $\mathbf E_{[e]}$ as a codimension-$1$ $P_S$-subrepresentation of $\mathfrak{g}/\mathfrak{p}_S.$ Of course, since $\mathbf E$ is a  $G$-invariant subbundle of $\mathbf T_{G/P_S}$, we also have \begin{equation}\label{Contact Structure}
\mathbf E\cong G\times_{P_S}\mathbf E_{[e]}.
\end{equation}
Hence, the contact line bundle $\mathbf L=\mathbf T_{G/P_S}/\mathbf E$ is given by \begin{equation}\label{Contact Line Bundle}
\mathbf L\cong G\times_{P_S}\left((\mathfrak{g}/\mathfrak{p}_S)/\mathbf E_{[e]}\right).
\end{equation}

Using \eqref{Contact Line Bundle}, one can describe $\mathbf L$ in terms of the isomorphism $X^*(T)^{W_S}\cong\text{Pic}(G/P_S)$.

\begin{proposition}\label{Description of the Contact Line Bundle}
If $\mathfrak{g}$ is simply-laced, then the highest root $\lambda$ belongs to $X^*(T)^{W_S}$ and $\mathbf L$ is isomorphic to $\mathcal{L}(\lambda)$.
\end{proposition}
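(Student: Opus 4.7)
The plan is to identify the weight of the one-dimensional $P_S$-representation $(\mathfrak{g}/\mathfrak{p}_S)/\mathbf E_{[e]}$ appearing in \eqref{Contact Line Bundle}, and to show that this weight is forced to be the highest root $\lambda$.

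First, since $\mathbf E_{[e]}$ is $P_S$-stable and hence $T$-stable, it decomposes as a direct sum of weight spaces inside $\mathfrak{g}/\mathfrak{p}_S=\bigoplus_{\beta\in\Delta^+\setminus\Delta_S^+}\mathfrak{g}_\beta$. Because the quotient $(\mathfrak{g}/\mathfrak{p}_S)/\mathbf E_{[e]}$ is one-dimensional, exactly one weight space $\mathfrak{g}_\gamma$ with $\gamma\in\Delta^+\setminus\Delta_S^+$ survives in the quotient, and the isomorphism $X^*(T)^{W_S}\cong\text{Pic}(G/P_S)$ recalled in Section \ref{Basic Setup} gives $\mathbf L\cong\mathcal{L}(\gamma)$. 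The task thus reduces to showing $\gamma=\lambda$.

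Next, by the reduction of Section \ref{Reduction to the Case of a Maximal Parabolic} we have $S=\Pi\setminus\{\alpha\}$ for a unique $\alpha\in\Pi$, and Lemma \ref{Basis of Invariant Weight Lattice} identifies $X^*(T)^{W_S}$ with the infinite cyclic group $\mathbb Z\omega_\alpha$. Since $\gamma$ lies in $X^*(T)^{W_S}$ we must have $\gamma=k\omega_\alpha$ for some integer $k\geq 1$. The simply-laced hypothesis enters first here: for any root $\gamma\neq\pm\alpha$ in a simply-laced root system the Cartan integer $\gamma(h_\alpha)=2(\gamma,\alpha)/(\alpha,\alpha)$ lies in $\{-1,0,1\}$, while the alternative $\gamma=\alpha$ (which would give $k=2$) would force $\alpha$ to be disconnected from the remainder of the Dynkin diagram of $\mathfrak{g}$ and so is ruled out in rank at least two. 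This forces $k=1$, so $\gamma=\omega_\alpha$ is simultaneously a fundamental weight and a positive root.

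Finally, I would show that a dominant positive root in a simply-laced root system must equal the highest root. Writing $\lambda-\gamma$ as a non-negative $\mathbb Z$-combination of simple roots and invoking the dominance of both $\lambda$ and $\gamma=\omega_\alpha$ yields $(\lambda,\gamma)\leq(\lambda,\lambda)$ together with $(\lambda,\gamma)\geq(\gamma,\gamma)$; the simply-laced equality of root lengths $(\lambda,\lambda)=(\gamma,\gamma)$ then forces $(\lambda-\gamma,\lambda-\gamma)=0$, so $\gamma=\lambda$. This simultaneously gives $\lambda=\omega_\alpha\in X^*(T)^{W_S}$ and $\mathbf L\cong\mathcal{L}(\lambda)$. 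The main delicate point is cleanly isolating the two distinct uses of simple-lacedness, namely bounding the Cartan integer $\gamma(h_\alpha)$ to force $k=1$ and exploiting equal root lengths to promote a dominant root to the highest root; both steps can fail outside type $ADE$, so the argument is genuinely type-dependent.
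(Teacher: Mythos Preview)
Your argument is correct, but it takes a different route from the paper's to establish that $\gamma$ is dominant. The paper invokes the Fano contact relation $\mathbf L^{\otimes(n+1)}\cong\mathbf K_{G/P_S}^\vee$ from Section~\ref{Section-Review-Fano-Contact}, identifies the anticanonical weight as $\mu_S=\sum_{\beta\in\Delta^+\setminus\Delta_S^+}\beta$, and deduces $(n+1)\gamma=\mu_S$; dominance of $\gamma$ then follows from dominance of $\mu_S$. You instead use the maximal-parabolic reduction directly: since $X^*(T)^{W_S}=\mathbb Z\omega_\alpha$, the root $\gamma$ is already a positive multiple of $\omega_\alpha$ and hence dominant. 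This bypasses the Demailly/KPSW input entirely, which is a genuine simplification. Both arguments then finish with ``the unique dominant root in a simply-laced system is the highest root''; the paper simply quotes this, while you supply a short proof via the inner-product inequalities. One remark: your intermediate step bounding the Cartan integer to force $k=1$ is correct but unnecessary, since $k\geq 1$ already makes $\gamma=k\omega_\alpha$ dominant, and your final inner-product argument needs only dominance of $\gamma$, not the specific value $k=1$. (You should also briefly justify $k\geq 1$: a positive root cannot be a nonpositive multiple of a fundamental weight, since $(\gamma,\gamma)>0$ forces $(\gamma,\beta)>0$ for some simple $\beta$.)
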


\begin{proof}
We begin with two observations. Firstly, we have $\mathbf L\cong\mathcal{L}(\gamma)$ for some $\gamma\in X^*(T)^{W_S}$. Secondly, since $\mathfrak{g}$ is simply-laced, $\lambda$ is the unique dominant root. Proving the proposition will therefore amount to showing that $\gamma\in\Delta$, and that $\gamma$ is dominant. For the former, note that each $T$-weight of $\mathfrak{g}/\mathfrak{p}_S$ is a root. It follows that the weight of the quotient representation $(\mathfrak{g}/\mathfrak{p}_S)/E_{[e]}$ is also a root. Also, the isomorphisms \eqref{Contact Line Bundle} and $L\cong\mathcal{L}(\gamma)$ together imply that $\gamma$ is the weight of $\mathfrak{g}/\mathfrak{p}_S$, so that $\gamma$ must be a root.

To prove that $\gamma$ is dominant, we note that \begin{equation}
\label{Power of Contact Line Bundle}\mathcal{L}((n+1)\gamma)\cong \mathbf L^{\otimes (n+1)}\cong \mathbf K_{G/P_S}^{\vee},\end{equation} where $2n+1$ is the (complex) dimension of $G/P_S$. Also, the isomorphism \eqref{Tangent Bundle} yields 
$$\mathbf K_{G/P_S}^{\vee}=\wedge^{2n+1}\mathbf T_{G/P_S}\cong G\times_{P_S}(\wedge^{2n+1}(\mathfrak{g}/\mathfrak{p}_S)).$$ By \eqref{Representation Isomorphism}, the weight of $\wedge^{2n+1}(\mathfrak{g}/\mathfrak{p}_S)$ is $$\mu_S:=\sum_{\beta\in\Delta^+\setminus\Delta_S^+}\beta,$$ and we therefore have \begin{equation}
\label{Canonical Bundle Isomorphism}
\mathbf K_{G/P_S}^{\vee}\cong\mathcal{L}(\mu_S).\end{equation} Combining \eqref{Power of Contact Line Bundle} and \eqref{Canonical Bundle Isomorphism}, we conclude that $(n+1)\gamma=\mu_S$. Since $\mu_S$ is dominant, this implies that $\gamma$ is dominant.
\end{proof}

Before proceeding to the next section, we note that our arguments allow us to quickly recover the following well-known fact.

\begin{corollary}\label{Uniqueness of Type A Contact Structure}
The subbundle $\mathbf{E}_{\text{min}}\subseteq\mathbf{T}_{\mathbb{P}(\mathcal{O}_{\text{min}})}$ is the unique $G$-invariant contact structure on $\mathbb{P}(\mathcal{O}_{\text{min}})$.
\end{corollary}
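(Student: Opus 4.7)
The plan is to derive the corollary as an immediate consequence of Proposition \ref{Description of the Contact Line Bundle} combined with Proposition \ref{Uniqueness of Corank-One Subbundles}. First I would use the $G$-equivariant variety isomorphism \eqref{Isomorphism} to transport the question to $G/P_\Lambda$: it suffices to prove that any $G$-invariant contact structure on $G/P_\Lambda$ coincides with the pullback of $\mathbf E_{\text{min}}$ along $\varphi$.

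Let $\mathbf E$ denote an arbitrary $G$-invariant contact structure on $G/P_\Lambda$. Since $\mathfrak g$ is simply-laced throughout the paper, Proposition \ref{Description of the Contact Line Bundle} applies both to $\mathbf E$ and to (the transported) $\mathbf E_{\text{min}}$, pinning down the contact line bundle in each case: both quotients are isomorphic to $\mathcal L(\lambda)$. In particular,
\[
\mathbf T_{G/P_\Lambda}/\mathbf E \;\cong\; \mathcal L(\lambda) \;\cong\; \mathbf T_{G/P_\Lambda}/\mathbf E_{\text{min}}
\]
as holomorphic line bundles. Proposition \ref{Uniqueness of Corank-One Subbundles} then immediately forces $\mathbf E = \mathbf E_{\text{min}}$ as corank-$1$ $G$-invariant subbundles of $\mathbf T_{G/P_\Lambda}$, which transports back to the desired equality on $\mathbb{P}(\mathcal{O}_{\text{min}})$.

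The argument is essentially a formal assembly of the two previous propositions, so I do not anticipate any substantive obstacle. If there is a subtle point, it is checking that Proposition \ref{Description of the Contact Line Bundle} can be applied without tacitly requiring $b_2(G/P_\Lambda) = 1$, since in type $A_n$ one has $b_2(\mathbb{P}(\mathcal O_{\text{min}})) = 2$. This is readily handled: the relation $\mathbf K_{G/P_\Lambda}^\vee \cong \mathbf L^{\otimes(n+1)}$ used in the proof of that proposition is a formal consequence of the nondegenerate $\mathbf L$-valued $2$-form on the contact distribution, and therefore holds on any contact manifold independently of the Picard rank. With this observation the corollary follows uniformly for all simply-laced $G$ considered in the paper.
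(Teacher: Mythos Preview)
Your argument for the simply-laced case is essentially the paper's own: pull back along \eqref{Isomorphism}, invoke Proposition \ref{Description of the Contact Line Bundle} to identify both contact line bundles with $\mathcal{L}(\lambda)$, and conclude via Proposition \ref{Uniqueness of Corank-One Subbundles}. Your remark that the relation $\mathbf K_{G/P_\Lambda}^\vee \cong \mathbf L^{\otimes(n+1)}$ holds on any contact manifold (so that Proposition \ref{Description of the Contact Line Bundle} does not secretly need $b_2=1$) is correct and worth making explicit, since in type $A_n$ with $n\geq 2$ one indeed has $b_2(\mathbb{P}(\mathcal{O}_{\text{min}}))=2$.

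There is, however, a genuine gap in scope. Your sentence ``Since $\mathfrak g$ is simply-laced throughout the paper'' is not accurate: the standing simply-laced hypothesis is imposed only at the start of Section 4, \emph{after} this corollary. At this point $G$ is an arbitrary connected, simply-connected complex simple group, so the corollary must also cover types $B$, $C$, $F_4$, $G_2$. For those types Proposition \ref{Description of the Contact Line Bundle} is unavailable (its proof uses that the highest root is the unique dominant root). The paper handles this second case by a different route: when $G$ is not of type $ADE$ the highest root is non-orthogonal to exactly one simple root, so $\text{Pic}(\mathbb{P}(\mathcal{O}_{\text{min}}))\cong\mathbb{Z}$, and Corollary \ref{Uniqueness of Contact Structure} then forces the two contact structures to agree. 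You should add this case to complete the proof.
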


\begin{proof}
Suppose that $\mathbf{F}\subseteq\mathbf{T}_{\mathbb{P}(\mathcal{O}_{\text{min}})}$is a $G$-invariant contact structure. Let us first assume $G$ to be of type $ADE$ (so that $\mathfrak g$ is simply-laced). Note that both $\mathbf{E}_{\text{min}}$ and $\mathbf{F}$ pull-back to $G$-invariant contact structures on $G/P_{\Lambda}$ under the isomorphism \eqref{Isomorphism}. By Proposition \ref{Description of the Contact Line Bundle}, it follows that both contact line bundles $\mathbf{T}_{\mathbb{P}(\mathcal{O}_{\text{min}})}/\mathbf{E}_{\text{min}}$ and $\mathbf{T}_{\mathbb{P}(\mathcal{O}_{\text{min}})}/\mathbf{F}$  pull-back to $\mathcal{L}(\lambda)$ under \eqref{Isomorphism}. In particular, these bundles are isomorphic, and Proposition \ref{Uniqueness of Corank-One Subbundles} then implies $\mathbf{F}=\mathbf{E}_{\text{min}}$.

If $G$ is not of type $ADE$, then $\text{Pic}(\mathbb{P}(\mathcal{O}_{\text{min}}))\cong\mathbb{Z}$.  Now, it follows from Corollary \ref{Uniqueness of Contact Structure} that $\mathbf{F}=\mathbf{E}_{\text{min}}$.
\end{proof}

\section{A Classification of $G$-Invariant Contact Structures on $G/P$}\subsection{The Main Theorem}We now consolidate the results presented in Sections \ref{Reduction to the Case of a Maximal Parabolic} and \ref{The Contact Line Bundle on G/P}. In light of Proposition \ref{Description of the Contact Line Bundle}, we will assume $G$ to be of type $ADE$ for the duration of this article. We then have the following relationship between the simple root $\alpha$ from \ref{Reduction to the Case of a Maximal Parabolic} and the highest root $\lambda$.

\begin{proposition}\label{Description of Alpha and Lambda}
The root $\alpha$ is the unique simple root not orthogonal to $\lambda$.
\end{proposition}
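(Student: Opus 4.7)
The plan is to combine the reduction from Section \ref{Reduction to the Case of a Maximal Parabolic}, the description of the contact line bundle from Proposition \ref{Description of the Contact Line Bundle}, and the basis statement of Lemma \ref{Basis of Invariant Weight Lattice}, then conclude by a simple positive-definiteness argument on the inner product.

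First, I would recall that by Section \ref{Reduction to the Case of a Maximal Parabolic} we have $S = \Pi \setminus \{\alpha\}$ for a unique simple root $\alpha$, so by Lemma \ref{Basis of Invariant Weight Lattice} the lattice $X^*(T)^{W_S}$ is the free rank-one group generated by the fundamental weight $\omega_\alpha$. Next, since $G$ is of type $ADE$, Proposition \ref{Description of the Contact Line Bundle} applies and tells us that the highest root $\lambda$ lies in $X^*(T)^{W_S}$. Thus $\lambda$ is a (nonzero) integer multiple of $\omega_\alpha$.

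Now I would use the characterization of $W_S$-invariance extracted in the proof of Lemma \ref{Basis of Invariant Weight Lattice}: an element of $X^*(T)$ is $W_S$-fixed if and only if it is orthogonal to every simple root in $S$. Applying this to $\lambda$, we get that $(\lambda,\beta)=0$ for every $\beta \in \Pi \setminus \{\alpha\}$. So $\alpha$ is certainly the only possible simple root not orthogonal to $\lambda$; the only thing left to rule out is the possibility that $\lambda$ is orthogonal to $\alpha$ as well.

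The key (and only nontrivial) step is this last exclusion, but it is immediate: if $(\lambda,\alpha)=0$ in addition to $(\lambda,\beta)=0$ for all $\beta\in\Pi\setminus\{\alpha\}$, then $\lambda$ would be orthogonal to every simple root, hence orthogonal to all of $\mathrm{span}_{\mathbb{R}}(\Delta)$. But $\lambda$ itself lies in this span and is nonzero, contradicting positive-definiteness of $(\cdot,\cdot)$. Hence $(\lambda,\alpha)\neq 0$, and $\alpha$ is the unique simple root not orthogonal to $\lambda$. I do not anticipate any serious obstacle; the work has essentially been done by Proposition \ref{Description of the Contact Line Bundle}, and what remains is a one-line positivity argument.
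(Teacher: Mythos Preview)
Your proposal is correct and follows essentially the same route as the paper: both invoke Lemma~\ref{Basis of Invariant Weight Lattice} and Proposition~\ref{Description of the Contact Line Bundle} to place $\lambda$ in $\mathbb{Z}\omega_\alpha\setminus\{0\}$, and then verify $(\lambda,\beta)=0$ exactly for $\beta\in\Pi\setminus\{\alpha\}$. The only cosmetic difference is in the last step: the paper reads off $(\lambda,\alpha)\neq 0$ directly from the fundamental-weight expansion $\lambda=\sum_{\beta}2\frac{(\lambda,\beta)}{(\beta,\beta)}\omega_\beta=k\omega_\alpha$ with $k\neq 0$, whereas you argue that $\lambda$ cannot be orthogonal to every simple root by positive-definiteness of the form.
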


\begin{proof}
By Lemma \ref{Basis of Invariant Weight Lattice}, $X^*(T)^{W_S}$ is freely generated by $\omega_{\alpha}$. Since $\lambda\in X^*(T)^{W_S}$ by Proposition \ref{Description of the Contact Line Bundle}, it follows that $\lambda=k\omega_{\alpha}$ for some non-zero $k\in\mathbb{Z}$. Also, we may write $$k\omega_{\alpha}=\lambda=\sum_{\beta\in \Pi}\lambda(h_{\beta})\omega_{\beta}=\sum_{\beta\in\Pi}2\frac{(\lambda,\beta)}{(\beta,\beta)}\omega_{\beta}.$$ Hence, for $\beta\in\Pi$, we have $(\lambda,\beta)=0$ if and only if $\beta\neq\alpha$.  
\end{proof}

Before continuing, we note the following implication of Proposition \ref{Description of Alpha and Lambda} for partial flag varieties in type $A$.

\begin{corollary}\label{No Invariant Contact Structure}
Suppose that $G=\SL_n(\mathbb{C})$ with $n\geq 3$. There does not exist a partial flag variety $X$ of $\SL_n(\mathbb{C})$ with $b_2(X)=1$ admitting an $\SL_n(\mathbb{C})$-invariant contact structure. Equivalently, none of the Grassmannians $\emph{Gr}(k,\mathbb{C}^n)$, $1\leq k\leq n-1$, supports an $\SL_n(\mathbb{C})$-invariant contact structure.
\end{corollary}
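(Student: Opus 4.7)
The plan is to combine the reduction from Section \ref{Reduction to the Case of a Maximal Parabolic} with Proposition \ref{Description of Alpha and Lambda}, and then exhibit an obstruction that is intrinsic to the type $A$ root system. Suppose for contradiction that some partial flag variety $X$ of $\SL_n(\mathbb C)$ with $b_2(X)=1$ carries an $\SL_n(\mathbb C)$-invariant contact structure. Then $X=G/P_S$ with $S=\Pi\setminus\{\alpha\}$ for some unique simple root $\alpha$, and Proposition \ref{Description of Alpha and Lambda} forces $\alpha$ to be the \emph{unique} simple root satisfying $(\lambda,\alpha)\neq 0$.

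Next I would compute the pairings $(\lambda,\beta)$ explicitly for $\beta\in\Pi$ in type $A_{n-1}$. Label the simple roots $\alpha_1,\ldots,\alpha_{n-1}$ in Dynkin-diagram order, so that $(\alpha_i,\alpha_i)=2$, $(\alpha_i,\alpha_{i\pm 1})=-1$, and $(\alpha_i,\alpha_j)=0$ if $|i-j|\geq 2$. The highest root is $\lambda=\alpha_1+\alpha_2+\cdots+\alpha_{n-1}$. A direct calculation gives
$$(\lambda,\alpha_1)=(\alpha_1,\alpha_1)+(\alpha_1,\alpha_2)=2-1=1,$$
$$(\lambda,\alpha_{n-1})=(\alpha_{n-2},\alpha_{n-1})+(\alpha_{n-1},\alpha_{n-1})=-1+2=1,$$
and for any intermediate $\alpha_i$ with $2\leq i\leq n-2$ the pairing $(\lambda,\alpha_i)=-1+2-1=0$. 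Since $n\geq 3$, the two simple roots $\alpha_1$ and $\alpha_{n-1}$ are distinct, and both have non-zero pairing with $\lambda$. This contradicts the uniqueness assertion of Proposition \ref{Description of Alpha and Lambda}, so no such invariant contact structure can exist.

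For the equivalence with the Grassmannian statement, recall that the partial flag varieties $X$ of $\SL_n(\mathbb C)$ with $b_2(X)=1$ are precisely the quotients $\SL_n(\mathbb C)/P_{\Pi\setminus\{\alpha_k\}}$, and each such quotient is canonically $\SL_n(\mathbb C)$-equivariantly isomorphic to the Grassmannian $\Gr(k,\mathbb C^n)$ for the appropriate $k\in\{1,\ldots,n-1\}$. Thus the first assertion translates directly into the non-existence of an invariant contact structure on any of these Grassmannians.

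The only genuinely delicate ingredient is the Proposition \ref{Description of Alpha and Lambda}, which is already established; after that, the proof is a short root-system computation, and I do not expect any real obstacle. The main point is simply that in type $A_{n-1}$ (for $n\geq 3$) the highest root is supported on the entire Dynkin diagram and meets it non-trivially at both endpoints, in contrast to the situation in types $D$ and $E$ where exactly one simple root fails to be orthogonal to $\lambda$.
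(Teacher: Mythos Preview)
Your proposal is correct and follows essentially the same argument as the paper: invoke Proposition \ref{Description of Alpha and Lambda} to force a unique simple root non-orthogonal to $\lambda$, then contradict this in type $A_{n-1}$ where both end-nodes $\alpha_1$ and $\alpha_{n-1}$ pair nontrivially with $\lambda$. The only difference is that you spell out the root-system computation explicitly, whereas the paper simply asserts that exactly two simple roots fail to be orthogonal to $\lambda$; the Grassmannian reformulation is handled identically.
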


\begin{proof}
By Proposition \ref{Description of Alpha and Lambda}, the existence of such an $X$ would imply that there was a unique simple root not orthogonal to the highest root $\lambda$. However, for $G=\SL_n(\mathbb{C})$, $n\geq 3$, there are exactly two simple roots not orthogonal to $\lambda$. The formulation in terms of Grassmannians follows from their being the partial flag varieties of $\SL_n(\mathbb{C})$ having $b_2=1$.
\end{proof}

\begin{remark}
Corollary \ref{No Invariant Contact Structure} has an interesting consequence when $n$ is an even positive integer. Indeed, the odd-dimensional projective space $\mathbb{P}^{n-1}$ is then isomorphic to the projectivization of the minimal nilpotent orbit of $\Sp_{n}(\mathbb{C})$. In particular, $\mathbb{P}^{n-1}$ admits an $\Sp_{n}(\mathbb{C})$-invariant contact structure. Yet, Corollary \ref{No Invariant Contact Structure} implies that this contact structure is not $\SL_{n}(\mathbb{C})$-invariant for $n\geq 4$. 
\end{remark}

Let us return to the matter at hand. Proposition \ref{Description of Alpha and Lambda} establishes that $S=\Pi\setminus\{\alpha\}$ is the collection of those simple roots which are orthogonal to $\lambda$, namely $S=\Lambda$. Hence, $G/P_S=G/P_{\Lambda}$, which is $G$-equivariantly isomorphic to $\mathbb{P}(\mathcal{O}_{\text{min}})$ via \eqref{Isomorphism}. It therefore remains to prove that \eqref{Isomorphism} is additionally an isomorphism of contact varieties, recalling that $G/P_S=G/P_{\Lambda}$ has the $G$-invariant contact structure $\mathbf E\subseteq\mathbf T_{G/P_{\Lambda}}$ fixed in \ref{Reduction to the Case of a Maximal Parabolic}.

\begin{theorem}\label{Main Theorem 2}
The map $\varphi:G/P_{\Lambda}\rightarrow\mathbb{P}(\mathcal{O}_{\text{min}})$ in \eqref{Isomorphism} is an isomorphism of contact varieties. 
\end{theorem}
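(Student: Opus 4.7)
The plan is to reduce the theorem to a direct application of the two key propositions established earlier, namely Proposition \ref{Uniqueness of Corank-One Subbundles} and Proposition \ref{Description of the Contact Line Bundle}. Concretely, I would introduce the subbundle $\mathbf{E}':=(d\varphi)^{-1}(\mathbf{E}_{\text{min}})\subseteq\mathbf{T}_{G/P_\Lambda}$ obtained by pulling the contact distribution on $\mathbb{P}(\mathcal{O}_{\text{min}})$ back through the $G$-equivariant variety isomorphism $\varphi$, and then argue that $\mathbf{E}'=\mathbf{E}$. Once this is established, $d\varphi$ identifies $\mathbf{E}$ with $\mathbf{E}_{\text{min}}$, and $\varphi$ is an isomorphism of contact varieties by definition.

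First I would observe that $\mathbf{E}'$ is a corank-$1$ subbundle of $\mathbf{T}_{G/P_\Lambda}$, since $d\varphi$ is an isomorphism of vector bundles. Moreover, because $\varphi$ is $G$-equivariant and $\mathbf{E}_{\text{min}}$ is $G$-invariant, $\mathbf{E}'$ is $G$-invariant as well. Finally, since $\varphi$ is a biholomorphism, it intertwines the Lie brackets on vector fields with the quotient maps to the respective line bundles, so $\mathbf{E}'$ is the distribution of a $G$-invariant contact structure on $G/P_\Lambda$ with contact line bundle $\varphi^*(\mathbf{T}_{\mathbb{P}(\mathcal{O}_{\text{min}})}/\mathbf{E}_{\text{min}})$.

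Next I would apply Proposition \ref{Description of the Contact Line Bundle} to both $G$-invariant contact structures $\mathbf{E}$ and $\mathbf{E}'$ on $G/P_\Lambda$. Since $G$ is of type $ADE$, the proposition forces both contact line bundles $\mathbf{T}_{G/P_\Lambda}/\mathbf{E}$ and $\mathbf{T}_{G/P_\Lambda}/\mathbf{E}'$ to be isomorphic to $\mathcal{L}(\lambda)$, and in particular isomorphic to each other as holomorphic line bundles on $G/P_\Lambda$. I can then invoke Proposition \ref{Uniqueness of Corank-One Subbundles} directly to conclude that $\mathbf{E}=\mathbf{E}'$.

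There is no substantial obstacle to this argument; the work is really the combinatorics and representation theory already done in Sections \ref{Basic Setup}--\ref{The Contact Line Bundle on G/P}. The only point that warrants a short verification is that the pullback of a contact structure under a $G$-equivariant biholomorphism is again a contact structure, which is routine from the naturality of the Frobenius-type bracket condition appearing in \eqref{ContactSES}. Everything else is a bookkeeping application of the previously established uniqueness results.
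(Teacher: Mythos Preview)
Your argument is correct, and it differs from the paper's proof of Theorem~\ref{Main Theorem 2} in one interesting respect. You identify the quotient line bundle $\mathbf{T}_{G/P_\Lambda}/\mathbf{E}'$ by first observing that $\mathbf{E}'$ is itself a $G$-invariant \emph{contact} structure and then applying Proposition~\ref{Description of the Contact Line Bundle} a second time. The paper instead computes the weight of $(\mathbf{T}_{G/P_\Lambda}/\varphi^*(\mathbf{E}_{\text{min}}))_{[e]}$ directly, using the explicit description \eqref{Fibre} of the fibre $(\mathbf{E}_{\text{min}})_{[\xi]}$ as $(\mathfrak{g}_{-\lambda})^{\perp}/\mathfrak{p}_{\Lambda}$ coming from Beauville's Remark~2.3, and reads off that the quotient is $\mathfrak{g}/(\mathfrak{g}_{-\lambda})^{\perp}\cong\mathfrak{g}_{\lambda}$.

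Your route is cleaner: it bypasses the explicit fibre description from Section~\ref{The Projectivization of the Minimal Nilpotent Orbit} entirely, and it is in fact exactly the technique the paper itself uses to prove Corollary~\ref{Uniqueness of Type A Contact Structure} in the $ADE$ case. The paper's route, on the other hand, avoids the (admittedly routine) verification that a biholomorphism pulls back a contact distribution to a contact distribution; it only needs $\varphi^*(\mathbf{E}_{\text{min}})$ to be a $G$-invariant corank-$1$ subbundle, which is immediate. Either way, the substance is Propositions~\ref{Uniqueness of Corank-One Subbundles} and~\ref{Description of the Contact Line Bundle}, as you say.
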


\begin{proof}
We are claiming that $\varphi^*(\mathbf{E}_{\text{min}})$ coincides with $\mathbf{E}$ when the former is regarded as a subbundle of $\mathbf{T}_{G/P_{\Lambda}}$. Observing that each of $\varphi^*(\mathbf{E}_{\text{min}})$ and $\mathbf E$ is a $G$-invariant corank-$1$ subbundle of $\mathbf{T}_{G/P_{\Lambda}}$, Proposition \ref{Uniqueness of Corank-One Subbundles} allows us to reduce this to showing $\mathbf{T}_{G/P_{\Lambda}}/\varphi^*(\mathbf{E}_{\text{min}})$ and $\mathbf{T}_{G/P_{\Lambda}}/\mathbf{E}$ to be isomorphic as holomorphic line bundles. The second line bundle is isomorphic to $\mathcal{L}(\lambda)$ by Proposition \ref{Description of the Contact Line Bundle}, so we are further reduced to proving that the fibre $(\mathbf{T}_{G/P_{\Lambda}}/\varphi^*(\mathbf{E}_{\text{min}}))_{[e]}$ has weight $\lambda$ as a $T$-representation.

Let $d_{[e]}\varphi:(T_{G/P_{\Lambda}})_{[e]}\rightarrow (T_{\mathbb{P}(\mathcal{O}_{\text{min}})})_{[\xi]}$ (where $[\xi]=\varphi([e])$) be the differential of $\varphi$ at $[e]$. Since $\varphi$ is $T$-equivariant, $d_{[e]}\varphi$ is an isomorphism of $T$-representations. Furthermore, $d_{[e]}\varphi(\varphi^*(\mathbf{E}_{\text{min}})_{[e]})=(\mathbf{E}_{\text{min}})_{[\xi]}$, so that $(\mathbf{T}_{G/P_{\Lambda}}/\varphi^*(\mathbf{E}_{\text{min}}))_{[e]}$ and $(T_{\mathbb{P}(\mathcal{O}_{\text{min}})})_{[\xi]}/(\mathbf{E}_{\text{min}})_{[\xi]}$ are isomorphic $T$-representations. We also have an isomorphism $(T_{\mathbb{P}(\mathcal{O}_{\text{min}})})_{[\xi]}\cong\mathfrak{g}/\mathfrak{p}_{\Lambda}$ from Section \ref{The Projectivization of the Minimal Nilpotent Orbit}, under which $(\mathbf{E}_{\text{min}})_{[\xi]}$ identifies with $(\mathfrak{g}_{-\lambda})^{\perp}/\mathfrak{p}_{\Lambda}$. Putting everything together, we have \begin{equation}
\label{Two Isomorphisms}
(\mathbf{T}_{G/P_{\Lambda}}/\varphi^*(\mathbf{E}_{\text{min}}))_{[e]}\cong (T_{\mathbb{P}(\mathcal{O}_{\text{min}})})_{[\xi]}/(\mathbf{E}_{\text{min}})_{[\xi]}\cong\mathfrak{g}/(\mathfrak{g}_{-\lambda})^{\perp}.\end{equation} Since we have $$(\mathfrak{g}_{-\lambda})^{\perp}=\mathfrak{b}\oplus\bigoplus_{\beta\in\Delta^+\setminus\{\lambda\}}\mathfrak{g}_{\beta},$$ \eqref{Two Isomorphisms} implies that $(\mathbf{T}_{G/P_{\Lambda}}/\varphi^*(\mathbf{E}_{\text{min}}))_{[e]}$ is indeed the $1$-dimensional $T$-representation of weight $\lambda$.  
\end{proof}

\subsection{Example: The Grassmannian of Isotropic $2$-Planes in $\mathbb{C}^{2n}$}

We now describe a class of explicit examples that satisfy the hypotheses of Theorem \ref{Main Theorem}. To this end, let us set $G=\SO_{2n}(\mathbb{C})$ with $n\geq 4$. Given $\theta\in\mathbb{R}/(2\pi\mathbb{Z})$, consider the $2\times 2$ matrix $$R(\theta):=\begin{bmatrix}
\cos(\theta) & -\sin(\theta)\\
\sin(\theta) & \cos(\theta)
\end{bmatrix}.$$ For $\theta_1,\theta_2,\ldots,\theta_n\in\mathbb{R}/(2\pi\mathbb{Z})$, we define $R(\theta_1,\theta_2,\ldots,\theta_n)$ to be the $2n\times 2n$ block-diagonal matrix $R(\theta_1)\oplus R(\theta_2)\oplus\cdots\oplus R(\theta_n)$. Note that the $R(\theta_1,\theta_2,\ldots,\theta_n)$ constitute a maximal torus of the compact real form $\SO(2n)\subseteq\SO_{2n}(\mathbb{C})$. Let $T\subseteq SO_{2n}(\mathbb{C})$ be the complexification of this maximal torus. We then choose our collection of simple roots to be $\Pi:=\{\alpha_1,\alpha_2,\ldots,\alpha_n\}$, where $\alpha_j:T\rightarrow\mathbb{C}^*$ is defined by the property $$\alpha_j(R(\theta_1,\theta_2,\ldots,\theta_n))=e^{i(\theta_j-\theta_{j+1})}$$ for $j\in\{1,\ldots,n-1\}$, while $\alpha_n:T\rightarrow\mathbb{C}^*$ satisfies $$\alpha_n(R(\theta_1,\theta_2,\ldots,\theta_n))=e^{i(\theta_{n-1}+\theta_n)}.$$ The highest root $\lambda$ is then given by $$\lambda(R(\theta_1,\theta_2,\ldots,\theta_n))=e^{i(\theta_1+\theta_2)}.$$ Furthermore, the subset of simple roots orthogonal to $\lambda$ is $\Lambda=\Pi\setminus\{\alpha_2\}$.

Now, let $B:\mathbb{C}^{2n}\otimes\mathbb{C}^{2n}\rightarrow\mathbb{C}$ be the complexification of the dot product on $\mathbb{R}^{2n}$. One then has the Grassmannian of isotropic $2$-planes in $\mathbb{C}^{2n}$, $Gr_B(2,\mathbb{C}^{2n})$. More explicitly, $$Gr_B(2,\mathbb{C}^{2n}):=\{V\in Gr(2,\mathbb{C}^{2n}):V\subseteq V^{\perp}\},$$ where $V^{\perp}$ denotes the complement of $V\in Gr(2,\mathbb{C}^{2n})$ with respect to $B$. This is a partial flag variety of $SO_{2n}(\mathbb{C})$, and one can verify that the $SO_{2n}(\mathbb{C})$-stabilizer of \begin{equation}
\label{Definition of W} W:=\text{span}\{e_1+ie_2,e_3+ie_4\}\in\text{Gr}_B(2,\mathbb{C}^{2n})\end{equation} is precisely $P_{\Lambda}\subseteq SO_{2n}(\mathbb{C})$. Hence, we have an $SO_{2n}(\mathbb{C})$-equivariant variety isomorphism $$\SO_{2n}(\mathbb{C})/P_{\Lambda}\cong\text{Gr}_B(2,\mathbb{C}^{2n}).$$ By \eqref{Isomorphism}, we have another $\SO_{2n}(\mathbb{C})$-equivariant isomorphism \begin{equation}\label{Description for SO_{2n}} \mathbb{P}(\mathcal{O}_{\text{min}})\cong\text{Gr}_B(2,\mathbb{C}^{2n}),\end{equation} where $\mathcal{O}_{\text{min}}$ is the minimal nilpotent orbit of $\SO_{2n}(\mathbb{C})$. 

It remains to give the contact structure on $\text{Gr}_B(2,\mathbb{C}^{2n})$ for which \eqref{Description for SO_{2n}} is an isomorphism of contact varieties. In other words, it remains to find the unique $\SO_{2n}(\mathbb{C})$-invariant contact structure on $\text{Gr}_B(2,\mathbb{C}^{2n})$. To this end, let $\mathbf F$ denote the tautological bundle on $\text{Gr}_B(2,\mathbb{C}^{2n})$, whose fibre over $V\in\text{Gr}_B(2,\mathbb{C}^{2n})$ is $V$ itself. Note that $\mathbf F$ is a subbundle of the trivial bundle $\text{Gr}_B(2,\mathbb{C}^{2n})\times\mathbb{C}^{2n}$, so that we may consider the subbundle $\mathbf{F}^{\perp}$ of complements with respect to $B$. By definition, $\mathbf F\subseteq \mathbf F^{\perp}$, and we may define $$\mathbf E:=\Hom(\mathbf F,\mathbf F^{\perp}/\mathbf F).$$ Note that $\mathbf{E}$ is canonically a subbundle of $\Hom(\mathbf F,\mathcal O^{\oplus2n}/\mathbf F)$, the pullback to $Gr_B(2,\mathbb{C}^{2n})$ of $\mathbf{T}_{Gr(2,\,\mathbb{C}^{2n})}$. In fact, we have the inclusion $$\mathbf{E}\subseteq\mathbf{T}_{Gr_B(2,\,\mathbb{C}^{2n})}$$ of subbundles of $\Hom(\mathbf F,\mathcal O^{\oplus2n}/\mathbf F)$, giving rise to a short exact sequence \begin{equation}\label{Short Exact Sequence} 0\rightarrow\mathbf E\rightarrow\mathbf T_{\text{Gr}_B(2,\,\mathbb{C}^{2n})}\rightarrow\wedge^2(\mathbf F^{\vee})\rightarrow 0\end{equation} (see \cite{Guillemin-Sternberg1990}, Chapter 14). Since $\wedge^2(\mathbf F^{\vee})=\det(\mathbf{F}^{\vee})$ is a line bundle, $\mathbf{E}$ is a corank-$1$ subbundle of $\mathbf{T}_{Gr_B(2,\,\mathbb{C}^{2n})}$. Indeed, we have the following proposition.   

\begin{proposition}
The subbundle $\mathbf{E}\subseteq\mathbf{T}_{\emph{Gr}_B(2,\,\mathbb{C}^{2n})}$ is the unique $\SO_{2n}(\mathbb{C})$-invariant contact structure on $\emph{Gr}_B(2,\mathbb{C}^{2n})$.
\end{proposition}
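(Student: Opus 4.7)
The plan is to verify that $\mathbf{E}$ coincides with the unique $\SO_{2n}(\mathbb{C})$-invariant contact structure on $\text{Gr}_B(2,\mathbb{C}^{2n})$, whose existence follows by transferring $\mathbf{E}_{\text{min}}$ along \eqref{Description for SO_{2n}} (via Theorem \ref{Main Theorem 2}) and whose uniqueness is guaranteed by Corollary \ref{Uniqueness of Contact Structure}. First I would observe that $\mathbf{E} = \Hom(\mathbf{F}, \mathbf{F}^\perp/\mathbf{F})$ is manifestly $\SO_{2n}(\mathbb{C})$-invariant, since the tautological bundle $\mathbf{F}$, its $B$-orthogonal $\mathbf{F}^\perp$, and the $\Hom$ construction are all $\SO_{2n}(\mathbb{C})$-equivariant. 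The short exact sequence \eqref{Short Exact Sequence} exhibits $\mathbf{E}$ as a corank-$1$ subbundle of $\mathbf{T}_{\text{Gr}_B(2, \mathbb{C}^{2n})}$ with quotient canonically isomorphic to $\wedge^2 \mathbf{F}^\vee \cong \det(\mathbf{F}^\vee)$.

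By Proposition \ref{Uniqueness of Corank-One Subbundles}, it then suffices to prove that $\det(\mathbf{F}^\vee)$ is isomorphic as a holomorphic line bundle to the contact line bundle of the unique $\SO_{2n}(\mathbb{C})$-invariant contact distribution on $G/P_\Lambda$, which by Proposition \ref{Description of the Contact Line Bundle} is $\mathcal{L}(\lambda)$. To carry this out, I would work at the base point $W \in \text{Gr}_B(2, \mathbb{C}^{2n})$ defined in \eqref{Definition of W}; since its stabilizer is exactly $P_\Lambda$, it plays the role of the identity coset under the isomorphism $\SO_{2n}(\mathbb{C})/P_\Lambda \cong \text{Gr}_B(2, \mathbb{C}^{2n})$. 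Via the isomorphism $X^*(T)^{W_\Lambda} \cong \text{Pic}(G/P_\Lambda)$ given by $\beta \mapsto \mathcal{L}(\beta)$, matching the $T$-weight of $\det(\mathbf{F}^\vee)|_W$ with $\lambda$ will identify $\det(\mathbf{F}^\vee)$ with $\mathcal{L}(\lambda)$.

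The key calculation is that $R(\theta_1, \ldots, \theta_n) \cdot (e_1 + ie_2) = e^{-i\theta_1}(e_1 + ie_2)$ and $R(\theta_1, \ldots, \theta_n) \cdot (e_3 + ie_4) = e^{-i\theta_2}(e_3 + ie_4)$, which I would verify by direct matrix multiplication with the block form of $R(\theta_1, \ldots, \theta_n)$. Writing $\epsilon_j$ for the character of $T$ sending $R(\theta_1,\ldots,\theta_n)$ to $e^{i\theta_j}$, so that $\lambda = \epsilon_1 + \epsilon_2$ in the normalization fixed in the paper, this shows that the fiber $\mathbf{F}|_W = W$ decomposes as a $T$-representation with weights $-\epsilon_1$ and $-\epsilon_2$. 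Consequently $\det(\mathbf{F}^\vee)|_W$ is the $1$-dimensional $T$-representation of weight $\epsilon_1 + \epsilon_2 = \lambda$, giving $\det(\mathbf{F}^\vee) \cong \mathcal{L}(\lambda)$ and finishing the proof via Proposition \ref{Uniqueness of Corank-One Subbundles}.

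The main obstacle is conceptual rather than computational: one must carefully reconcile the various identifications — between $W$ and the identity coset of $\SO_{2n}(\mathbb{C})/P_\Lambda$, and between a fiber of an $\SO_{2n}(\mathbb{C})$-equivariant bundle at $W$ and the associated $P_\Lambda$-representation under the associated-bundle construction — so that the weight comparison genuinely yields $\lambda$ in the normalization used in the paper, rather than $-\lambda$ or some Weyl-conjugate thereof. Once the identifications and signs are pinned down, the uniqueness results already established in the paper finish the argument without further work.
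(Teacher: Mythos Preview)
Your proof is correct and follows essentially the same strategy as the paper's: both check that $\mathbf{E}$ is $\SO_{2n}(\mathbb{C})$-invariant and then invoke Proposition~\ref{Uniqueness of Corank-One Subbundles} to reduce the question to identifying the quotient line bundle $\det(\mathbf{F}^{\vee})$. The only difference is in that last step---the paper simply cites the well-known fact that $\det(\mathbf{F}^{\vee})$ is the ample generator of $\text{Pic}(\text{Gr}_B(2,\mathbb{C}^{2n}))$ and uses the characterization of the contact line bundle from Section~\ref{Section-Review-Fano-Contact}, whereas you carry out the explicit $T$-weight computation at $W$ to obtain $\det(\mathbf{F}^{\vee})\cong\mathcal{L}(\lambda)$ directly.
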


\begin{proof}
By Proposition \ref{Uniqueness of Corank-One Subbundles} and the discussion at the end of Section \ref{Section-Review-Fano-Contact},  the $\SO_{2n}(\mathbb{C})$-invariant contact structure on $\text{Gr}_B(2,\mathbb{C}^{2n})$ is the unique subbundle of $\mathbf{H}\subseteq\mathbf{T}_{\text{Gr}_B(2,\,\mathbb{C}^{2n})}$ such $\mathbf{H}$ is $\SO_{2n}(\mathbb{C})$-invariant and $\mathbf{T}_{\text{Gr}_B(2,\,\mathbb{C}^{2n})}/\mathbf{H}$ is the ample generator of $\text{Pic}(\text{Gr}_B(2,\mathbb{C}^{2n}))$. Accordingly, it will suffice to prove that $\mathbf{E}$ possesses these two properties. For the former, note that $\mathbf{F}^{\perp}/\mathbf{F}$ is an $\SO_{2n}(\mathbb{C})$-invariant subbundle of $\mathcal O^{\oplus2n}/\mathbf{F}$. Hence, $\mathbf{E}=\Hom(\mathbf{F},\mathbf{F}^{\perp}/\mathbf{F})$ is an $\SO_{2n}(\mathbb{C})$-invariant subbundle of $\Hom(\mathbf{F},\mathcal O^{\oplus2n}/\mathbf{F})$, and therefore also of $\mathbf{T}_{\text{Gr}_B(2,\,\mathbb{C}^{2n})}$. For our second property, \eqref{Short Exact Sequence} gives a bundle isomorphism $$\mathbf{T}_{\text{Gr}_B(2,\,\mathbb{C}^{2n})}/\mathbf{E}\cong\det(\mathbf{F}^{\vee}).$$ The bundle $\det(\mathbf{F}^{\vee})$ is indeed the ample generator of $\text{Pic}(\text{Gr}_B(2,\mathbb{C}^{2n}))$, so our proof is complete. \end{proof}

We wish to conclude with a comparison of our presentation of the $\SO_{2n}(\mathbb{C})$-invariant contact structure on the isotropic Grassmannian to the one presented in \cite{Hwang2001} (pp.353--354), whose distribution we will denote by $\mathbf P$.  There, $\mbox{Gr}_B(2,\mathbb C^{2n})$ is given an alternative presentation, as a parameter space for lines in a hyperquadric of dimension $2n-2$.  If $\ell$ is line in the hyperquadric representating a point in the parameter space, and if we choose an isomorphism $\ell\cong\mathbb P^1$, the fibre $\mathbf P_\ell$ is the space of global holomorphic sections of the $(2n-4)$-fold direct sum of the hyperplane bundle on the $\mathbb P^1$.  One must choose an isomorphism for each point in order to describe $\mathbf P$ and so this description --- while explicit --- is local.  Our presentation of the unique $\SO_{2n}(\mathbb{C})$-invariant contact structure, with distribution $\mathbf E$ given above, does not depend on a family of isomorphisms and uses the tautological bundle on the isotropic Grassmannian directly.

\bibliographystyle{acm} 
\bibliography{fano}

\begin{thebibliography}{10}

\bibitem{Beauville1998}
{\sc Beauville, A.}
\newblock Fano contact manifolds and nilpotent orbits.
\newblock {\em Comment. Math. Helv. 73}, 4 (1998), 566--583.

\bibitem{Boothby1961}
{\sc Boothby, W.~M.}
\newblock Homogeneous complex contact manifolds.
\newblock In {\em Proc. {S}ympos. {P}ure {M}ath., {V}ol. {III}}. American
  Mathematical Society, Providence, R. I., 1961, pp.~144--154.

\bibitem{Boothby1962}
{\sc Boothby, W.~M.}
\newblock A note on homogeneous complex contact manifolds.
\newblock {\em Proc. Amer. Math. Soc. 13\/} (1962), 276--280.

\bibitem{Buczynski2010}
{\sc Buczy{\'n}ski, J.}
\newblock Duality and integrability on contact {F}ano manifolds.
\newblock {\em Doc. Math. 15\/} (2010), 821--841.

\bibitem{Demailly2002}
{\sc Demailly, J.-P.}
\newblock On the {F}robenius integrability of certain holomorphic {$p$}-forms.
\newblock In {\em Complex geometry ({G}\"ottingen, 2000)}. Springer, Berlin,
  2002, pp.~93--98.

\bibitem{Guillemin-Sternberg1990}
{\sc Guillemin, V., and Sternberg, S.}
\newblock {\em Variations on a {T}heme by {K}epler}, vol.~42 of {\em American
  Mathematical Society Colloquium Publications}.
\newblock American Mathematical Society, Providence, RI, 1990.

\bibitem{Hwang2001}
{\sc Hwang, J.-M.}
\newblock Geometry of minimal rational curves on {F}ano manifolds.
\newblock In {\em School on {V}anishing {T}heorems and {E}ffective {R}esults in
  {A}lgebraic {G}eometry ({T}rieste, 2000)}, vol.~6 of {\em ICTP Lect. Notes}.
  Abdus Salam Int. Cent. Theoret. Phys., Trieste, 2001, pp.~335--393.

\bibitem{Kebekus2001}
{\sc Kebekus, S.}
\newblock Lines on contact manifolds.
\newblock {\em J. Reine Angew. Math. 539\/} (2001), 167--177.

\bibitem{KPSW2000}
{\sc Kebekus, S., Peternell, T., Sommese, A.~J., and Wi{\'s}niewski, J.~A.}
\newblock Projective contact manifolds.
\newblock {\em Invent. Math. 142}, 1 (2000), 1--15.

\bibitem{KobayashiOchiai1973}
{\sc Kobayashi, S., and Ochiai, T.}
\newblock Characterizations of complex projective spaces and hyperquadrics.
\newblock {\em J. Math. Kyoto Univ. 13\/} (1973), 31--47.

\bibitem{Kollar1996}
{\sc Koll{\'a}r, J.}
\newblock {\em Rational {C}urves on {A}lgebraic {V}arieties}, vol.~32 of {\em
  Ergebnisse der Mathematik und ihrer Grenzgebiete. 3. Folge. A Series of
  Modern Surveys in Mathematics [Results in Mathematics and Related Areas. 3rd
  Series. A Series of Modern Surveys in Mathematics]}.
\newblock Springer-Verlag, Berlin, 1996.

\bibitem{LeBrunSalamon}
{\sc LeBrun, C., and Salamon, S.}
\newblock Strong rigidity of positive quaternion-{K}\"ahler manifolds.
\newblock {\em Invent. Math. 118}, 1 (1994), 109--132.

\bibitem{Peternell2001-1}
{\sc Peternell, T.}
\newblock Contact structures, rational curves and {M}ori theory.
\newblock In {\em European {C}ongress of {M}athematics, {V}ol. {I}
  ({B}arcelona, 2000)}, vol.~201 of {\em Progr. Math.} Birkh\"auser, Basel,
  2001, pp.~509--518.

\bibitem{Peternell2001-2}
{\sc Peternell, T.}
\newblock Subsheaves in the tangent bundle: integrability, stability and
  positivity.
\newblock In {\em School on {V}anishing {T}heorems and {E}ffective {R}esults in
  {A}lgebraic {G}eometry ({T}rieste, 2000)}, vol.~6 of {\em ICTP Lect. Notes}.
  Abdus Salam Int. Cent. Theoret. Phys., Trieste, 2001, pp.~285--334.

\bibitem{Snow1989}
{\sc Snow, D.~M.}
\newblock Homogeneous vector bundles.
\newblock In {\em Group {A}ctions and {I}nvariant {T}heory ({M}ontreal, {PQ},
  1988)}, vol.~10 of {\em CMS Conf. Proc.} Amer. Math. Soc., Providence, RI,
  1989, pp.~193--205.

\end{thebibliography}
\end{document}